\newtheorem{thm}{Theorem}[section]
\newtheorem{cor}[thm]{Corollary}
\newtheorem{lem}[thm]{Lemma}
\newtheorem{fact}[thm]{Fact}
\theoremstyle{definition}
\newtheorem{rem}[thm]{Remark}
\newtheorem{ex}[thm]{Example}
\numberwithin{equation}{section}
\newcommand{\R}{\mathbb{R}}
\newcommand{\Rd}{{\R^d}}
\newcommand{\EAm}{(m)}
\newcommand{\XSm}{(\mu)}
\newcommand{\LpE}{L^p\EAm}
\newcommand{\E}{\mathcal{E}}
\newcommand{\D}{\mathcal{D}}
\newcommand{\DpL}{\D(L_p)}
\newcommand{\DEp}{\D(\E_p)}
\newcommand{\C}{C}
\newcommand{\intE}{\int\limits_E}
\newcommand{\iintEE}{\iint\limits_{E\times E}}
\newcommand{\iintEEd}{\iint\limits_{\mathclap{E\times E\setminus\diag}}}
\newcommand{\dt}{\,dt}
\newcommand{\goodfunctions}{\mathcal{U}}
\newcommand{\diag}{\mathrm{diag}}
\newcommand{\indyk}{\mathbf{1}}
\DeclareMathOperator*\sgn{sgn}
\DeclareMathOperator*\supp{supp}
\begin{document}


\baselineskip=17pt


\title{Hardy--Stein identity for pure-jump Dirichlet forms}

\author{Micha\l\ Gutowski\\
Faculty of Pure and Applied Mathematics\\ 
Wroc\l{}aw University of Science and Technology\\
Wyb. Wyspia\'nskiego 27\\
50-370 Wroc\l{}aw, Poland\\
E-mail: michal.gutowski@pwr.edu.pl
}

\date{}

\maketitle


\renewcommand{\thefootnote}{}

\footnote{2020 \emph{Mathematics Subject Classification}:
    Primary
    31C25;
    Secondary
    42B25,
    60J35,
    60G51.
}

\footnote{\emph{Key words and phrases}: Hardy--Stein identity, nonlocal operator, Littlewood--Paley theory.}

\renewcommand{\thefootnote}{\arabic{footnote}}
\setcounter{footnote}{0}


\begin{abstract}
We prove the $L^p$ variant of the Hardy--Stein identity for Sobolev--Bregman forms associated with pure-jump Dirichlet forms, under rather mild assumptions. Along the way, we obtain a general result in terms of the $p$\nobreakdash-form defined in a more abstract way.
\end{abstract}

\section{Introduction}
\label{sec:introduction}

In this article we prove the Hardy--Stein identity for a rather general regular conservative pure-jump Dirichlet form
\begin{align}
\label{eq:E2=iintFp}
    \E(u,v)
    =
    \frac{1}{2}\iintEEd (u(y)-u(x))(v(y)-v(x))\,J(dx,dy)
\end{align}
on a locally compact separable metric space $E$, with Borel $\sigma$-algebra $\mathcal{B}$ and a positive Radon measure $m$ such that $\supp(m)=E$.
Here $\diag$ is the diagonal of the cartesian product $E\times E$ and $J$ is the symmetric jumping measure on $E\times E\setminus\diag$.
Let $(P_t)_{t\geq0}$ be a strongly continuous semigroup of contractions associated to the Dirichlet form $\E$.

The following is the main result of this paper.
\begin{thm}
\label{thm:HSp-j}
    Let $p\in(1,\infty)$.
    Let $\E$ be a regular conservative pure-jump Dirichlet form
    given by \eqref{eq:E2=iintFp}.
    Assume that:
    \begin{enumerate}[label=(\roman*)]
        \item \label{ass:continuous}
        For every $t>0$ and $f\in\LpE$ we have
        $P_tf\in\C(E)$, that is, $P_tf$ is continuous on $E$.
        
        \item \label{ass:diff}
        The semigroup $(P_t)_{t\geq0}$ is strongly stable on $\LpE$, i.e., for every $f\in\LpE$,
            \begin{align}
            \label{lim:|P_Tf|to0}
                \|P_T f \|_p \to 0
                \qquad \mbox{when }T\to\infty.
            \end{align}
    \end{enumerate}
    If $f\in\LpE$,
    then
    \begin{align}
    \label{eq:HSpure-jump}
        \intE |f|^p \,dm
        =
        \int\limits_0^{\infty} \quad\iintEEd
        F_p(P_tf(x),P_tf(y)) \,J(dx,dy)dt.
    \end{align}
    Here
    $
        F_p(a,b):=
        |b|^p-|a|^p - pa^{\langle p - 1 \rangle}(b-a)
    $
    is the Bregman divergence.
\end{thm}

In general, it is known that for every regular Dirichlet form $\E$, there exists (unique in a certain sense) $m$-symmetric Hunt process whose Dirichlet form is $\E$. For more details see \cite[Chapter 7]{MR2778606}. On the other hand, there is a one-to-one correspondence between the family of closed symmetric forms $\E$ on $L^2\EAm$ and the family of non-positive definite self-adjoint operators $L$ on $L^2\EAm$ (see \cite[Theorem 1.3.1.]{MR2778606}). We use these facts to present some examples of Dirichlet forms within the present settings.

\begin{ex}
\label{ex:alfa}
    The classical example of a pure-jump Dirichlet form that fulfills our assumptions is the Dirichlet form associated with the fractional Laplacian. More precisely, let $0<\alpha<2$ and denote by $L=-(-\Delta)^{\alpha/2}$ the fractional Laplacian on $E=\Rd$ equipped with the Lebesgue measure. This operator induces the Dirichlet form given by \eqref{eq:E2=iintFp}, where the jumping measure satisfies $J(dx,dy) = \nu(y-x)dxdy$ and
    \begin{align*}
        \nu(y) := \frac{\mathcal{A}_{d,\alpha}}{|y|^{d+\alpha}}.
    \end{align*}
    In the above formula the constant $\mathcal{A}_{d,\alpha}$ is given by
    \begin{align*}
        \mathcal{A}_{d,\alpha} := \frac{2^\alpha\Gamma\left(\frac{d+\alpha}{2}\right)}{\pi^{d/2}\Gamma\left(-\frac{\alpha}{2}\right)}.
    \end{align*}
    It is well-known that assumptions \ref{ass:continuous} and \ref{ass:diff} hold in this case.
\end{ex}

\begin{ex}
    More generally, Theorem~\ref{thm:HSp-j} applies to Dirichlet forms associated with pure-jump symmetric L\'{e}vy processes on $E=\Rd$ with L\'{e}vy measure $\nu$ fulfilling the Hartman--Wintner condition, i.e.,
    \begin{align*}
        \lim_{|\xi|\to\infty} \frac{\psi(\xi)}{\log(|\xi|)} = \infty,
    \end{align*}
    where $\psi$ is the characteristic exponent of L\'{e}vy process given by
    $
    \psi(\xi)=\int_\Rd \left(1-\cos(\xi\cdot x)\right) \,\nu(dx).
    $
    Then the Dirichlet form is given by \eqref{eq:E2=iintFp}, where the jumping measure satisfies $J(dx,dy) = \nu(dy-x)dx$.
    It is known that assumptions \ref{ass:continuous} and \ref{ass:diff} hold in this case. For more details, we refer the reader to \cite{MR3556449} and \cite{HS}, where the Hardy--Stein identity in this context was already considered.
\end{ex}

The following example show that the main result of this paper allows us to formulate the Hardy--Stein in more general cases. It concerns more general spaces than the Euclidean space $\Rd$.

\begin{ex}
\label{ex:d-set}
    Let $n\in\mathbb{N}_+$ and let $d$ be any real number from $(0,n]$. Let $E$ be a closed $d$\nobreakdash-set in $\R^n$ with measure $m$. That is, there are constants $C_1,C_2>0$ such that
    \begin{align}
    \label{neq:d-set}
        C_1 r^d
        \leq
        m(B(x,r))
        \leq
        C_2 r^d,
        \quad
        \mbox{for all } x\in E, r\in(0,1].
    \end{align}
    Here $B(x,r)$ is the intersection of the set $E$ and a ball in $\R^n$ of radius $r$ centered at $x$. One may show, that $m$ is equivalent to the $d$\nobreakdash-dimensional Hausdorff measure restricted to $E$. Typical examples of such sets are self-similar sets and Riemannian manifolds embedded into a Euclidean space $\R^n$.
    In addition to \eqref{neq:d-set}, we assume that
    \begin{align}
    \label{ass:dsets}
        m(B(x,r))
        \leq
        C_2 r^d,
        \quad
        \mbox{for every } x\in E, r>0.
    \end{align}
    For example, assumption \eqref{ass:dsets} is fulfilled for $E$ bounded in $\R^n$.
    
    Fix $0<\alpha<2$. Let $\phi:E\times E\to(0,\infty)$ be a symmetric function such that there are some constants $C_3,C_4>0$ such that
    \begin{align*}
        C_3 \leq \phi(x,y) \leq C_4
        \quad \mbox{for } m\mbox{-a.e. } x,y\in E.
    \end{align*}
    Consider the Dirichlet form \eqref{eq:E2=iintFp} with jumping measure
    \begin{align*}
        J(dx,dy) = \frac{\phi(x,y)}{|y-x|^{d+\alpha}} m(dx)m(dy).
    \end{align*}
    The process associated with the Dirichlet form within the present settings is called the stable-like process.
    
    For basic properties of such Dirichlet form, we refer the reader to \cite{MR2008600}.
    One may show that this Dirichlet form is conservative.
    One can show that assumption \ref{ass:continuous} holds. For the reader's convenience, we sketch the proof of this claim in Appendix \ref{apx:d-set}.
    Condition \ref{ass:diff} is satisfied whenever $m(E)=\infty$. If $m(E)<\infty$, then \ref{ass:diff} fails. In this case, however, modified Hardy--Stein identity holds; see Appendix \ref{apx:ss}.
\end{ex}

Along the way of proving the main result, in Theorem~\ref{thm:HS} we give a more general variant of the Hardy--Stein identity:
\begin{align*}
    \intE |f|^p \,dm
    =
    p\int\limits_0^{\infty} \E_p[P_tf] \dt,
    \quad
    f\in\LpE,
\end{align*}
where $\E_p$ is the $p$\nobreakdash-form defined as the limit of appropriate approximate forms $\E^{(t)}(u,u^{\langle p-1\rangle})=\langle u-P_tu,u^{\langle p-1\rangle} \rangle/t$. The $p$\nobreakdash-form may be treated as an extension of the classical quadratic Dirichlet form.

In the case of pure-jump Dirichlet forms, in Theorem~\ref{thm:Ep=iintFp} we derive the explicit formula for the $p$\nobreakdash-form
\begin{align*}
    \E_p[u] = \frac{1}{p} \iintEEd F_p(u(x),u(y)) \,J(dx,dy)
\end{align*}
for continuous functions $u$ from the domain $\DEp$ of $p$\nobreakdash-form.

The Hardy--Stein identity was originally proved for the classical Laplace operator as a consequence of Green's theorem and the chain rule $\Delta u^p = p(p-1)u^{p-2}|\nabla u|^2+pu^{p-1}\Delta u$. We refer to Lemma 1 and Lemma 2 in Stein \cite[p. 86--88]{MR0290095}. The identity in the non-local case can be shown using analytic methods as in \cite{MR3556449} or in \cite{HS}. On the other hand, in \cite{MR3994925} the identity is derived from It\^{o}'s lemma.

The Hardy--Stein identity has found applications in the Littlewood--Paley theory, especially to the proof of $L^p$-boundedness of the square function and Fourier multipliers. For more details, we refer to \cite{MR3556449} and \cite{MR3994925}. The Hardy--Stein identity also gives a characterization of Hardy spaces \cite{MR3251822} and it is used to prove Douglas-type identities \cite{bogdan2020nonlinear}.

The goal of this paper is to extend the previous Hardy--Stein type identities to more general Dirichlet forms which correspond to pure-jump regular Dirichlet form under certain mild assumptions. Our main tool is the theory of Dirichlet forms and $p$\nobreakdash-forms. To prove Theorem~\ref{thm:HS} we adopt the approach from \cite[Theorem 15]{HS}.

The $p$\nobreakdash-form (or Sobolev--Bregman form) corresponding to the fractional Laplacian (defined as a double integral of the Bregman divergence) was studied in \cite{lenczewska2021sharp}, \cite{bogdan2021optimal}, and for more general L\'{e}vy operators in \cite{HS}, but similar expression appeared already in \cite[Lemma 7.2]{MR1386760}. A similar object was frequently used also in \cite{bogdan2020nonlinear}.

In this work, we begin with a non-standard definition of the $p$\nobreakdash-form -- as a limit of appropriate approximating forms $\E^{(t)}(u,u^{\langle p-1\rangle})$. This gives access to an extended class of operators. The same definition was used in the case of the Brownian motion in \cite[Section 8]{HS}. At this moment it is known that this definition is equivalent to the commonly used definition in terms of the Bregman divergence for pure-jump L\'{e}vy operators \cite[Lemma 7]{bogdan2021optimal}, \cite[Proposition 13]{HS}. In Theorem~\ref{thm:Ep=iintFp} we show that for more general pure-jump Dirichlet form the equivalence remains true for continuous functions.
Equivalence of the two definitions for discontinuous functions in the domain of the $p$\nobreakdash-form remains an open problem.

Independently from the main subject of this article, the Hardy--Stein identity, we are interested in the relationship between the domains of the Dirichlet form and the $p$\nobreakdash-form. In \cite[Lemma 7]{bogdan2021optimal} and \cite[Proposition 13]{HS} it was shown that for a pure-jump L\'{e}vy operators, with certain assumptions about the L\'{e}vy measure,
the domain $\DEp$ of the $p$\nobreakdash-form  consists of functions of the form $u^{\langle p/2\rangle}$, where $u$ is a function in the domain $\D(\E)$ of the Dirichlet form.
In Theorem~\ref{thm:Ep=iintFp} we show the inclusion $\DEp\subseteq\D(\E)^{\langle p/2\rangle}$ in the general pure-jump case, and we conjecture that, in fact, equality holds.

The structure of the article is as follows. In Section \ref{sec:prelimitaries} we introduce the notions of a Dirichlet form, its semigroup, the definition of the corresponding $p$\nobreakdash-form, and derivatives of $L^p$-valued functions and we discuss their basic properties. The Hardy--Stein identity in the general case is proved in Section \ref{sec:HS}. In Section \ref{sec:pure-jump} we consider pure-jump Dirichlet forms and we show that the corresponding $p$\nobreakdash-form $\E_p$ for continuous functions is given by a double integral. This proves the Hardy--Stein identity for such Dirichlet forms (Theorem~\ref{thm:HSp-j}). In Appendix \ref{apx:ss} we discuss the sufficient condition for assumption \ref{ass:diff} and the situation when this assumption is not necessarily true.
In Appendix \ref{apx:d-set} we prove assumption \ref{ass:continuous} in case of Example~\ref{ex:d-set}.

\section{Preliminaries}
\label{sec:prelimitaries}

We consider a regular conservative Dirichlet form $\E$ on $L^2(E,\mathcal{B}, m)$ where $(E,\mathcal{B}, m)$ is a measure space.
For simplicity we will write $\LpE:=L^p(E,\mathcal{B}, m)$ for any $p\in[1,\infty]$.
We assume that $E$ is a locally compact separable metric space and $m$ is a positive Radon measure on $E$ such that $\supp(m)=E$, defined on the $\sigma$-algebra $\mathcal{B}$ of all Borel sets in $E$.
By $\C(E)$ we denote the class of continuous functions on $E$.
Let $(P_t)_{t\geq0}$ be the strongly continuous semigroup of contractions associated with the Dirichlet form $\E$.
Recall that for every $p\in[1,\infty)$ $(P_t)_{t\geq0}$ is a strongly continuous semigroup of contractions on $\LpE$.
For a function $u\in\LpE$ we have
\begin{align*}
    P_tu(x) = \intE u(y)\,P_t(x,dy),
    \qquad t\geq0, x\in E.
\end{align*}
Here $P_t(x,dy)$ is the probability kernel associated with the operator $P_t$.
To emphasize symmetry, we write $P_t(dx,dy):=P_t(x,dy)m(dx)$; then $P_t(dx,dy)=P_t(dy,dx)$.

We use the notation
\begin{align*}
	a^{\langle \kappa \rangle} := \left|a \right|^\kappa \sgn a,
\end{align*}
whenever above expression makes sense.
Note that
\begin{align*}
	(
    |x|^\kappa
    )' = \kappa x^{\langle \kappa-1 \rangle}
    ,\quad
    \mbox{if }
    x\in\R,\ \kappa>1
    \mbox{ or }
     x\in\R\!\setminus\!\{0\},
\end{align*}
and
\begin{align*}
	(
    x^{\langle \kappa \rangle}
    )' = \kappa |x|^{\kappa-1 }
     ,\quad
     \mbox{if }
     x\in\R,\ \kappa\geq1
    \mbox{ or }
     x\in\R\!\setminus\!\{0\}.
\end{align*}

Let $p,q\in(1,\infty)$ with $p^{-1}+q^{-1}=1$. For $u\in\LpE$, $v\in L^q\EAm$ we use the notation
\begin{align*}
    \langle u,v \rangle := \intE u(x) v(x) \,m(dx).
\end{align*}
For $t>0$ and $u\in\LpE$, $v\in L^q\EAm$ we define
\begin{align}
    \E^{(t)}(u,v)
    :=
    \frac{1}{t}
    \langle u-P_tu,v \rangle.
\end{align}

Let $u\in\LpE$. We define the nonlinear functional
\begin{align*}
    \E_p[u] := \lim_{t\to0^+} \E^{(t)}\left(u,u^{\langle p-1\rangle}\right)
\end{align*}
with its natural domain
\begin{align*}
    \DEp := \left\{u\in\LpE\!: \mbox{finite } \lim_{t\to0^+} \E^{(t)}\left(u,u^{\langle p-1\rangle}\right) \mbox{ exists}\right\}.
\end{align*}
We call $\E_p$ the \emph{$p$\nobreakdash-form} corresponding to the Dirichlet form $\E$.

When $p=2$, then $\E_2$ is just the usual Dirichlet form $\E(u,u)$ with domain $\D(\E_2)=\D(\E)$.

It is well-known that if $u\in L^2\EAm$, then $\E^{(t)}(u,u)$ is non-increasing as a function of $t$ \cite[Lemma 1.3.4]{MR2778606}.

We consider the infinitesimal generator $L_p$ of the semigroup $(P_t)_{t\geq0}$ on $\LpE$:
\begin{align}
\label{eq:Lp-generator}
    L_p u := \lim_{t\to0^+} \frac{1}{t}(P_tu - u)
    \qquad
    \mbox{in }
    \LpE
\end{align}
with the natural domain
\begin{align*}
    \DpL := \left\{u\in\LpE\!: \lim_{t\to0^+} \frac{1}{t}(P_tu - u) \mbox{ exists in }\LpE\right\}.
\end{align*}
Of course, $\DpL\subseteq\DEp$ and
\begin{align*}
    \E_p[u] = -\langle L_p u, u^{\langle p-1\rangle} \rangle,
    \qquad
    u\in\DpL.
\end{align*}

We use the \emph{Bregman divergence}: a function $F_p:\R\times\R\to\R$, where $p>1$, defined by
\begin{align*}
    F_p(a,b)
    :=
    |b|^p-|a|^p - pa^{\langle p - 1 \rangle}(b-a).
\end{align*}
We also use \emph{symmetrized Bregman divergence}
\begin{align*}
    H_p(a,b)
    :=
    \frac{1}{2}
    \left(
    F_p(a,b)+F_p(b,a)
    \right)
    =
    \frac{p}{2}
    (b-a)
    \left(
    b^{\langle p - 1 \rangle} - a^{\langle p - 1 \rangle}
    \right).
\end{align*}
Note that $F_p(a,b)$ is the second-order Taylor remainder of the convex map $\R\ni a\mapsto |a|^p\in\R$, we have $F_p\ge 0$ and also $H_p\ge 0$. Furthermore, $F_2(a,b)=H_2(a,b)=(b-a)^2$.

The following estimate was proved in \cite[Lemma 1]{MR1160303} for $H_p$ in place of $F_p$.
\begin{lem}
\label{lem:FpsimD}
    Let $p\in(1,\infty)$. There exist constants $c_p,C_p>0$ such that
    \begin{align}
    \label{neq:FpsimD}
        c_p (b^{\langle p/2\rangle} - a^{\langle p/2\rangle})^2
        \leq
        F_p(a,b)
        \leq
        C_p (b^{\langle p/2\rangle} - a^{\langle p/2\rangle})^2
    \end{align}
    for all $a,b\in\R$.
\end{lem}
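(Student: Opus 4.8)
My plan is to prove both inequalities by a homogeneity-and-compactness argument, reducing the estimate on all of $\R\times\R$ to the behaviour of a single ratio on the unit circle. Write $G(a,b):=(b^{\langle p/2\rangle}-a^{\langle p/2\rangle})^2$ for the right-hand side of \eqref{neq:FpsimD}. The upper bound is in fact immediate from the cited estimate for $H_p$: since $F_p\ge0$ we have $F_p(a,b)\le F_p(a,b)+F_p(b,a)=2H_p(a,b)$, and \cite[Lemma 1]{MR1160303} gives $H_p(a,b)\le C\,G(a,b)$, whence $F_p(a,b)\le 2C\,G(a,b)$. The substance of the lemma is therefore the lower bound, and I would in fact obtain both bounds uniformly from the self-contained argument below.

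First I would record the structural properties shared by $F_p$ and $G$. A direct computation using $(\lambda a)^{\langle\kappa\rangle}=\lambda^{\kappa}a^{\langle\kappa\rangle}$ for $\lambda>0$ shows that $F_p$ and $G$ are both homogeneous of degree $p$ under positive scaling, $F_p(\lambda a,\lambda b)=\lambda^{p}F_p(a,b)$ and likewise for $G$, and that both are invariant under the reflection $(a,b)\mapsto(-a,-b)$. Moreover $F_p$ is the second-order Taylor remainder of the strictly convex function $g(x)=|x|^p$, so $F_p(a,b)>0$ for $a\ne b$ and $F_p(a,b)=0$ iff $a=b$; since $x\mapsto x^{\langle p/2\rangle}$ is a strictly increasing bijection of $\R$, the same is true of $G$. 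Hence the ratio $\phi(a,b):=F_p(a,b)/G(a,b)$ is well defined, continuous and strictly positive on $\{a^2+b^2=1\}\setminus\{a=b\}$, and by homogeneity the inequalities \eqref{neq:FpsimD} on $\R\times\R$ are equivalent to $0<\inf\phi\le\sup\phi<\infty$ over this set.

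The one point requiring care — and the main obstacle — is that the circle minus the two diagonal points is not compact, so I must understand $\phi$ as $(a,b)$ approaches the diagonal, where numerator and denominator both vanish. Here I would Taylor-expand about a diagonal point $a=b=s$ with $s\ne0$ (the diagonal points of the unit circle have $s=\pm1/\sqrt2\ne0$). Writing $g(x)=|x|^p$ and $h(x)=x^{\langle p/2\rangle}$, one has $F_p(a,b)=\tfrac12 g''(\xi)(b-a)^2$ for some $\xi$ between $a$ and $b$, while $G(a,b)=\left(\frac{h(b)-h(a)}{b-a}\right)^2(b-a)^2$. Letting $b\to a$ with $a\to s\ne0$ gives $F_p(a,b)/(b-a)^2\to\tfrac12 g''(s)=\tfrac12 p(p-1)|s|^{p-2}$ and $G(a,b)/(b-a)^2\to h'(s)^2=\tfrac{p^2}{4}|s|^{p-2}$, so $\phi\to 2(p-1)/p$, a finite positive constant independent of $s$.

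Consequently $\phi$ extends continuously to a strictly positive function on the whole compact circle $\{a^2+b^2=1\}$, and therefore attains a positive minimum $c_p$ and a finite maximum $C_p$ there. Finally, for arbitrary $(a,b)\ne(0,0)$ I would set $\lambda=\sqrt{a^2+b^2}$ and invoke homogeneity to write $\phi(a,b)=\phi(a/\lambda,b/\lambda)\in[c_p,C_p]$, which yields $c_p\,G(a,b)\le F_p(a,b)\le C_p\,G(a,b)$; the case $(a,b)=(0,0)$ is trivial since both sides vanish. This proves \eqref{neq:FpsimD}.
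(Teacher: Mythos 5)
Your proof is correct and is essentially the paper's own argument: both reduce via the degree-$p$ positive homogeneity of $F_p$ and $G(a,b)=(b^{\langle p/2\rangle}-a^{\langle p/2\rangle})^2$ to showing the ratio $F_p/G$ is continuous and strictly positive on a compactified set, the only delicate point being the diagonal limit $\frac{2(p-1)}{p}$, which the paper computes by L'H\^{o}pital's rule after substituting $x=b/a$ and you compute by the Lagrange remainder and the mean value theorem (legitimately, since near $s\neq0$ the interval between $a$ and $b$ avoids the origin where $g''$ may blow up for $p<2$). The remaining differences are cosmetic: your unit-circle normalization absorbs the paper's separate case $a=0$ and its limits as $x\to\pm\infty$ into ordinary points of continuity, and your shortcut $F_p\le 2H_p$ for the upper bound via the cited lemma is an optional extra your compactness argument does not need.
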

\begin{proof}
    If $a=0$, we have $F_p(a,b)=|b|^p$ and the statement is obvious. If $a\neq0$, then we let  $x:=b/a$ and we arrive
    at the following equivalent formulation of \eqref{neq:FpsimD}:
    \begin{align}
    \label{neq:FpsimDx}
        c_p (x^{\langle p/2\rangle} - 1)^2
        \leq
        F_p(x,1)
        \leq
        C_p (x^{\langle p/2\rangle} - 1)^2,
    \end{align}
    where $F_p(x,1) = |x|^p - 1 - p(x-1)$. The above expressions define continuous and positive functions of $x\neq1$. Therefore to prove \eqref{neq:FpsimDx} it is enough to notice that
    \begin{align*}
        \lim_{x\to\pm\infty}
        \frac{F_p(x,1)}{\left(x^{\langle p/2\rangle} - 1\right)^2}
        =
        1
    \end{align*}
    and, by L'H\^{o}pital's rule,
    \begin{align*}
        \lim_{x\to1}
        \frac{F_p(x,1)}{\left(x^{\langle p/2\rangle} - 1\right)^2}
        =
        \lim_{x\to1}
        \frac{x^{p-1}-1}{\left(x^{p/2} - 1\right)x^{(p-2)/2}}
        =
        \frac{2(p-1)}{p}.
    \end{align*}
    The above limits are finite and positive. This completes the proof.
\end{proof}

For every $u\in L^1\EAm$, by symmetry of $P_t$, we have
\begin{align}
\label{eq:intPtu=intu}
    \intE P_tu(x) \,m(dx)
    &=
    \intE\left(\intE u(y) \,P_t(x,dy)\right)\,m(dx)
    \\ \nonumber
    &=
    \intE u(y) \left(\intE \,P_t(y,dx)\right)\,m(dy)
    =
    \intE u(x) \,m(dx).
\end{align}
Let $u\in\LpE$. Using \eqref{eq:intPtu=intu} for $|u|^p\in L^1\EAm$ we can write
\begin{align}
\label{eq:Et=intintFp}
    \E^{(t)}\left(u,u^{\langle p-1\rangle}\right)
    &=
    \frac{1}{t} \langle u-P_tu,u^{\langle p-1\rangle} \rangle
    \\
    &= \nonumber
    -\frac{1}{t} \iintEE u^{\langle p-1\rangle}(x) (u(y)-u(x)) \,P_t(dx,dy)
    \\
    &= \nonumber
    \frac{1}{pt} \intE P_t(|u|^p)(x) \,m(dx)
    -
    \frac{1}{pt} \intE |u|^p(x) \,m(dx)
    \\
    &\quad- \nonumber
    \frac{1}{t} \iintEE u^{\langle p-1\rangle}(x) (u(y)-u(x)) \,P_t(dx,dy)
    \\
    &= \nonumber
    \frac{1}{pt} \iintEE F_p(u(x),u(y)) \,P_t(dx,dy).
\end{align}
In particular, we see that $\E^{(t)}\left(u,u^{\langle p-1\rangle}\right)\geq0$, and so $\E_p[u]\geq0$ whenever $u\in\DEp$.

By symmetry of $P_t$ we can also write
\begin{align}
\label{eq:Et=intintHp}
    \E^{(t)}\left(u,u^{\langle p-1\rangle}\right)
    =
    \frac{1}{pt} \iintEE H_p(u(x),u(y)) \,P_t(dx,dy).
\end{align}

Let $p\in[1,\infty)$ and let $I\subseteq[0,\infty)$ be an interval. For a mapping
$I \ni t \mapsto u(t) \in \LpE$ we denote
\begin{align*}
    \Delta_hu(t) := u(t+h)-u(t) \quad \mbox{if }t,t+h\in I.
\end{align*}

We say that $u$ is \emph{continuous} on $I$ with values in $\LpE$ if $\Delta_hu(t) \to 0$ in $\LpE$ as $h\to 0$ for every $t\in I$,
and we say that $u$ is \emph{continuously differentiable} (or shortly $\C^1$) on $I$ with values in $\LpE$ if $u'(t) := \lim_{h \to 0} \frac{1}{h} \Delta_hu(t)$ exists in $\LpE$ for every $t\in I$ and the mapping $I \ni t \mapsto u'(t) \in \LpE$ is
continuous.

The following two elementary results are proved rigorously in \cite[Lemmas 15, 16]{MR4372148}.

\begin{lem}[\cite{MR4372148}]
    Let $p\in(1,\infty)$. If $I\ni t \mapsto u(t)$ is $\C^1$ on $I$ with values in $\LpE$, then $|u|^p$ is $\C^1$ on $I$ with values in $L^1\EAm$ and $(|u|^p)' = p u^{\langle p - 1 \rangle} u'$.
\end{lem}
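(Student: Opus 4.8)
The plan is to reduce the claim to the scalar chain rule for $\phi(a):=|a|^\kappa$ and then transfer the resulting pointwise identity to the $L^{\frac{p}{\kappa}}\EAm$ setting by a domination argument. Since $\kappa>1$, the map $\phi$ is $\C^1$ on $\R$ with $\phi'(a)=\kappa a^{\langle\kappa-1\rangle}$, and this derivative is continuous (including at $0$, where it vanishes). Hence for fixed reals $v,w$ the fundamental theorem of calculus applied to $s\mapsto|v+sw|^\kappa$ gives $|v+w|^\kappa-|v|^\kappa=\kappa\int_0^1 (v+sw)^{\langle\kappa-1\rangle}w\,ds$. Applying this pointwise with $v=u(t)(x)$ and $w=\Delta_hu(t)(x)$ yields, for a.e.\ $x$,
\begin{align*}
\frac{1}{h}\big(|u(t+h)|^\kappa-|u(t)|^\kappa\big)
=
\kappa\int_0^1 \big(u(t)+s\Delta_hu(t)\big)^{\langle\kappa-1\rangle}\,\frac{\Delta_hu(t)}{h}\,ds ,
\end{align*}
and the goal is to show that the left-hand side converges in $L^{\frac{p}{\kappa}}\EAm$ to $\kappa\, u(t)^{\langle\kappa-1\rangle}u'(t)$ as $h\to0$.

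Throughout I would keep careful track of exponents by using H\"older's inequality with the conjugate pair $\tfrac{p}{\kappa-1}$ and $p$. This is legitimate because $1<\kappa\le p$ forces $\tfrac{\kappa-1}{p}+\tfrac1p=\tfrac{\kappa}{p}$, so the product of a function in $L^{\frac{p}{\kappa-1}}\EAm$ and one in $\LpE$ lies in $L^{\frac{p}{\kappa}}\EAm$; in particular $z\in\LpE$ implies $z^{\langle\kappa-1\rangle}\in L^{\frac{p}{\kappa-1}}\EAm$ with $\|z^{\langle\kappa-1\rangle}\|_{p/(\kappa-1)}=\|z\|_p^{\kappa-1}$.

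The main step, and the main obstacle, is to pass from the a.e.\ pointwise limit to $L^{\frac{p}{\kappa}}\EAm$-convergence of the difference quotient. I would argue by the subsequence principle. Given $h_n\to0$, use $\tfrac1{h_n}\Delta_{h_n}u(t)\to u'(t)$ in $\LpE$ to extract a subsequence along which $\tfrac1{h_n}\Delta_{h_n}u(t)\to u'(t)$ a.e.\ and $\big|\tfrac1{h_n}\Delta_{h_n}u(t)\big|\le G$ a.e.\ for some $G\in\LpE$. Then $\Delta_{h_n}u(t)=h_n\cdot\tfrac1{h_n}\Delta_{h_n}u(t)\to0$ a.e., and for $h_n\le1$ one has $|\Delta_{h_n}u(t)|\le G$; by continuity of $a\mapsto a^{\langle\kappa-1\rangle}$ the integrand above converges a.e.\ (uniformly in $s\in[0,1]$) to $u(t)^{\langle\kappa-1\rangle}u'(t)$. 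For domination I would use $\big|(u(t)+s\Delta_{h_n}u(t))^{\langle\kappa-1\rangle}\big|\le(|u(t)|+G)^{\kappa-1}$ uniformly in $s$, so the whole difference quotient is bounded by $\Theta:=\kappa(|u(t)|+G)^{\kappa-1}G$, which lies in $L^{\frac{p}{\kappa}}\EAm$ by the H\"older pairing above. Generalized dominated convergence then gives convergence in $L^{\frac{p}{\kappa}}\EAm$, and since every subsequence admits such a further subsequence with the same limit, the difference quotient itself converges. This proves differentiability and the formula $(|u|^\kappa)'=\kappa u^{\langle\kappa-1\rangle}u'$. The delicate point is exactly that $a\mapsto a^{\langle\kappa-1\rangle}$ is only H\"older (not Lipschitz) when $\kappa<2$, which is why I route everything through a.e.\ convergence with an integrable majorant rather than through a Lipschitz estimate.

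Finally, for continuity of the derivative I would record the continuity of the Nemytskii map $\LpE\ni z\mapsto z^{\langle\kappa-1\rangle}\in L^{\frac{p}{\kappa-1}}\EAm$, which follows from the same a.e.-subsequence-plus-domination argument. Given $t_n\to t_0$ in $I$, the hypotheses that $u$ and $u'$ are continuous give $u(t_n)\to u(t_0)$ and $u'(t_n)\to u'(t_0)$ in $\LpE$; Nemytskii continuity then yields $u(t_n)^{\langle\kappa-1\rangle}\to u(t_0)^{\langle\kappa-1\rangle}$ in $L^{\frac{p}{\kappa-1}}\EAm$, and a last application of H\"older (splitting into one converging factor and one bounded factor) gives $\kappa\,u(t_n)^{\langle\kappa-1\rangle}u'(t_n)\to\kappa\,u(t_0)^{\langle\kappa-1\rangle}u'(t_0)$ in $L^{\frac{p}{\kappa}}\EAm$. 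This shows that $|u|^\kappa$ is $\C^1$ with values in $L^{\frac{p}{\kappa}}\EAm$, completing the argument.
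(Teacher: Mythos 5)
Your proof is correct. Note that the paper itself does not prove this lemma at all --- it is quoted from \cite{HS}, where the standard argument runs through quantitative pointwise estimates on the Taylor remainder of $a\mapsto|a|^\kappa$, split into two regimes: $\bigl||b|^\kappa-|a|^\kappa-\kappa a^{\langle\kappa-1\rangle}(b-a)\bigr|\lesssim|b-a|^\kappa$ for $\kappa\in(1,2]$ and $\lesssim|b-a|^2(|a|+|b|)^{\kappa-2}$ for $\kappa>2$, which after H\"older gives the difference-quotient error in $L^{\frac{p}{\kappa}}\EAm$ an explicit rate of order $|h|^{\kappa-1}$ (respectively $|h|$). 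Your route is genuinely different: you represent the increment exactly via the fundamental theorem of calculus, $|v+w|^\kappa-|v|^\kappa=\kappa\int_0^1(v+sw)^{\langle\kappa-1\rangle}w\,ds$, and then pass to the limit by the Urysohn subsequence principle combined with the standard fact that $L^p$-convergence yields an a.e.-convergent subsequence with an $L^p$ majorant $G$, so that $\Theta=\kappa(|u(t)|+G)^{\kappa-1}G\in L^{\frac{p}{\kappa}}\EAm$ dominates everything and dominated convergence applies; the same mechanism gives you continuity of the Nemytskii map $z\mapsto z^{\langle\kappa-1\rangle}$ and hence of the derivative. Your exponent bookkeeping ($\frac{\kappa-1}{p}+\frac{1}{p}=\frac{\kappa}{p}$, $\|z^{\langle\kappa-1\rangle}\|_{p/(\kappa-1)}=\|z\|_p^{\kappa-1}$) is right, and you correctly identify the non-Lipschitz behaviour of $a\mapsto a^{\langle\kappa-1\rangle}$ for $\kappa<2$ as the obstruction that forces a soft argument. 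What the two approaches buy: yours avoids the case split at $\kappa=2$ entirely and needs no pointwise remainder inequalities, at the price of being non-quantitative (no convergence rate), whereas the estimate-based proof is constructive and yields the explicit $O(|h|^{\kappa\wedge2-1})$ rate. Only cosmetic quibbles: write $|h_n|\le1$ rather than $h_n\le1$ since $h\to0$ is two-sided (one-sided at endpoints of $I$), and plain dominated convergence applied to $|D_n-D|^{p/\kappa}\le(2\Theta)^{p/\kappa}$ already suffices --- no generalized version is needed since your majorant is fixed along the subsequence.
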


Let $f\in\LpE$ and let $u(t) := P_tf\in\LpE$.
If $f\in\DpL$, then $u'(t) = L_p P_t f = P_t L_p f=L_p u(t)$.
We know that if $p>1$, then $(P_t)_{t\geq0}$ is an analytic semigroup on $\LpE$ \cite[p. 67]{MR0252961}. In particular, for every $t>0$ and $f\in\LpE$ the derivative
$\frac{d}{dt} P_t f = u'(t)$
exists in $\LpE$. Hence $P_t f \in \DpL$ and
$u'(t) = L_p P_t f = L_p u(t)$.

\begin{cor}[\cite{MR4372148}]
\label{cor:ppp}
    Let $f\in\DpL$ and $u(t) := P_tf$.
    Then $|u(t)|^p$ is $\C^1$ on $[0,\infty)$ with values in $L^1\EAm$, with derivative
    \begin{equation}
    \label{eq:ppp}
        (|u(t)|^p)' =p u(t)^{\langle p - 1 \rangle} u'(t)= p u(t)^{\langle p - 1 \rangle} L_p u(t), \quad t \geq 0.
    \end{equation}
\end{cor}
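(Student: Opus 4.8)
The plan is to reduce everything to the preceding Lemma by first checking that $u(t) := P_t f$ is itself $\C^1$ on $[0,\infty)$ with values in $\LpE$. Once that is established, the Lemma applied with $I=[0,\infty)$ and the given $\kappa\in(1,p]$ immediately yields that $|u|^\kappa$ is $\C^1$ on $[0,\infty)$ with values in $L^{\frac{p}{\kappa}}\EAm$ and that $(|u|^\kappa)' = \kappa u^{\langle \kappa-1\rangle} u'$. Substituting the identity $u'(t) = L_p u(t)$ recorded just before the statement then gives \eqref{eq:ppp}, so the whole proof hinges on verifying the $\C^1$ regularity of $u$.

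First I would verify that $u'$ exists and is continuous. Since $f\in\DpL$, the limit $\lim_{t\to0^+} t^{-1}(P_tf-f)=L_pf$ exists in $\LpE$, so in particular $L_pf\in\LpE$. Using that $P_t$ commutes with $L_p$ together with the semigroup property, the derivative is $u'(t)=P_t L_p f$ for every $t\geq0$ (at $t=0$ this is the right derivative, whose existence is precisely the assumption $f\in\DpL$). Because $L_pf\in\LpE$ and $(P_t)_{t\geq0}$ is a strongly continuous semigroup on $\LpE$, the map $t\mapsto u'(t)=P_t(L_pf)$ is continuous on $[0,\infty)$ with values in $\LpE$. This is exactly the $\C^1$ hypothesis needed to invoke the Lemma.

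The only delicate point, and the one I would emphasize, is the behaviour at the endpoint $t=0$. For $t>0$ the analyticity of the semigroup already guarantees $P_tf\in\DpL$ and the existence of $u'(t)$ for arbitrary $f\in\LpE$; but to include $t=0$ and thereby obtain a genuinely $\C^1$ map on the closed half-line one must use $f\in\DpL$, so that the right derivative at the origin exists and agrees with the strongly continuous boundary value $P_tL_pf\big|_{t=0}=L_pf=L_pu(0)$. Everything else is a direct application of the Lemma and the substitution $u'=L_p u$, so no additional estimates are required.
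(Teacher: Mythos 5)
Your proposal is correct and follows essentially the same route as the paper: the text preceding Corollary~\ref{cor:ppp} establishes exactly the facts you verify, namely that for $f\in\DpL$ the map $u(t)=P_tf$ satisfies $u'(t)=L_pP_tf=P_tL_pf=L_pu(t)$ (continuously, by strong continuity of $(P_t)_{t\geq0}$ applied to $L_pf\in\LpE$), after which the preceding Lemma with $I=[0,\infty)$ yields \eqref{eq:ppp}. Your attention to the endpoint $t=0$, where membership $f\in\DpL$ is what upgrades the analytic-semigroup regularity on $(0,\infty)$ to $\C^1$ regularity on the closed half-line, is precisely the point of assuming $f\in\DpL$ in the statement.
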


\section{Hardy--Stein identity in the general case}
\label{sec:HS}

In this section we prove the Hardy--Stein identity for arbitrary regular Dirichlet form. The explicit form of the right-hand side of following identity depends on the specific Dirichlet form. In particular, in Section \ref{sec:pure-jump} we will give an explicit expression for pure-jump Dirichlet forms.

\begin{thm}
\label{thm:HS}
    Let $p\in(1,\infty)$.
    Assume that condition (ii) from Theorem~\ref{thm:HSp-j} holds.
    For every $f\in\LpE$,
    \begin{align}
    \label{eq:HS}
        \intE |f|^p \,dm
        =
        p\int\limits_0^{\infty} \E_p[P_tf] \dt.
    \end{align}
\end{thm}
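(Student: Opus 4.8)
The plan is to recognise the right-hand side as the time-integral of $-\tfrac{d}{dt}\|P_t f\|_p^p$ and then apply the fundamental theorem of calculus, following the scheme of \cite[Theorem 15]{HS}. Write $u(t):=P_t f$ and $\phi(t):=\intE |u(t)|^p\,dm=\|P_tf\|_p^p$. Since $(P_t)_{t\ge0}$ is analytic on $\LpE$, we have $P_t f\in\DpL$ for every $t>0$; fixing $\varepsilon>0$ and applying Corollary \ref{cor:ppp} to the datum $P_\varepsilon f\in\DpL$ (with $\kappa=p$, so that $L^{p/\kappa}\EAm=L^1\EAm$), the map $t\mapsto|u(t)|^p$ is $\C^1$ on $[\varepsilon,\infty)$ with values in $L^1\EAm$ and
\[
(|u(t)|^p)'=p\,u(t)^{\langle p-1\rangle}L_p u(t).
\]

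First I would differentiate $\phi$. The $L^1\EAm$-valued differentiability just recalled means $\tfrac1h(|u(t+h)|^p-|u(t)|^p)\to(|u(t)|^p)'$ in $L^1\EAm$, so the scalar difference quotients converge and the derivative passes under the integral sign:
\[
\phi'(t)=\intE (|u(t)|^p)'\,dm=p\,\langle L_p u(t),\,u(t)^{\langle p-1\rangle}\rangle,\qquad t>0.
\]
The pairing is legitimate because $|u(t)^{\langle p-1\rangle}|=|u(t)|^{p-1}$ lies in $L^q\EAm$ (as $(p-1)q=p$) while $L_p u(t)\in\LpE$. Using the identity $\E_p[w]=-\langle L_p w,w^{\langle p-1\rangle}\rangle$, valid for $w\in\DpL$, gives $\phi'(t)=-p\,\E_p[P_t f]$ for every $t>0$.

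Next I would integrate and take limits. Since $\phi$ is $\C^1$ on each $[\varepsilon,T]$, the classical fundamental theorem of calculus yields, for $0<\varepsilon<T<\infty$,
\[
\phi(\varepsilon)-\phi(T)=p\int_\varepsilon^T \E_p[P_t f]\dt.
\]
As $\varepsilon\to0^+$, strong continuity of $(P_t)$ on $\LpE$ gives $P_\varepsilon f\to f$ in $\LpE$, hence $\phi(\varepsilon)\to\|f\|_p^p$; as $T\to\infty$, assumption (ii) gives $\phi(T)\to0$. Because $\E_p[P_t f]\ge0$ for each $t>0$ (by \eqref{eq:Et=intintFp} together with $F_p\ge0$), the integrals increase as $\varepsilon\downarrow0$ and $T\uparrow\infty$, so monotone convergence lets me pass the limits inside, yielding $\intE|f|^p\,dm=p\int_0^\infty\E_p[P_tf]\dt$.

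The main obstacle is the rigorous interchange of differentiation and integration in the computation of $\phi'$: everything rests on upgrading scalar differentiability of $\phi$ from the $L^1\EAm$-valued differentiability furnished by Corollary \ref{cor:ppp}, and the $\varepsilon$-shift is indispensable because that corollary presumes $f\in\DpL$ whereas here $f$ is only assumed in $\LpE$. A minor point to verify along the way is that $\E_p[P_t f]$ is genuinely the finite, nonnegative quantity $-\langle L_pP_tf,(P_tf)^{\langle p-1\rangle}\rangle$ for each fixed $t>0$, which again follows from $P_t f\in\DpL$.
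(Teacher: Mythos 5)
Your proof is correct and follows essentially the same route as the paper: differentiate $t\mapsto\|P_tf\|_p^p$ via Corollary \ref{cor:ppp} to get $-p\,\E_p[P_tf]$, apply the fundamental theorem of calculus, and use assumption (ii), strong continuity, and nonnegativity of $\E_p$ with monotone convergence to pass to $(0,\infty)$. Your $\varepsilon$-shift exploiting analyticity of the semigroup is just a compressed version of the paper's two-step argument (first $f\in\DpL$ on $[0,T]$, then the general case via $P_sf\in\DpL$).
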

\begin{proof}
    Consider first $f\in\DpL$ and fix $T>0$. Let $u(t):=P_tf$. By Corollary~\ref{cor:ppp}, $|u|^p$ is $\C^1$ on $[0,T]$ with values in $L^1\EAm$ with derivative $(|u(t)|^p)'=p u(t)^{\langle p-1\rangle} L_pu(t)$.
    The integral is a continuous linear functional on $L^1\EAm$, hence $[0,T]\ni t\mapsto \int_E |u(t)|^p \,dm$ is $\C^1$ and
    \begin{align*}
        \frac{d}{dt} \intE |u(t)|^p \,dm
        &=
        \intE \frac{d}{dt} |u(t)|^p \,dm
        =
        \intE pu(t)^{\langle p-1\rangle} L_pu(t) \,dm
        \\
        &=
        p\langle L_p u(t), u(t)^{\langle p-1\rangle} \rangle
        =
        -p \E_p[u(t)].
    \end{align*}
    Therefore, we can write
    \begin{align*}
        \intE |f|^p \,dm
        -
        \intE |P_Tf|^p \,dm
        &=
        -\left(
        \intE |u(T)|^p \,dm
        -
        \intE |u(0)|^p \,dm
        \right)
        \\
        &=
        -\int\limits_0^T \frac{d}{dt} \intE |u(t)|^p \,dm\dt
        \\
        &=
        p\int\limits_0^T \E_p[u(t)] \dt.
    \end{align*}
    
    From the strong stability of the semigroup (assumption \ref{ass:diff}), we have $\int_E |P_Tf|^p \,dm\to0$ when $T\to\infty$.
    Since $\E_p[u(t)]\geq0$, the right-hand side tends to $p\int_0^{\infty} \E_p[u(t)] \dt$ as $T\to\infty$. Therefore
    \begin{align*}
        \intE |f|^p \,dm = p\int\limits_0^{\infty} \E_p[P_tf] \dt.
    \end{align*}
    Next, we relax the assumption that $f\in\DpL$. Let $f$ be an arbitrary function in $\LpE$ and let $s>0$. Recall that $P_s f\in\DpL$. Thus, \eqref{eq:HS} holds
    for $P_sf$:
    \begin{align*}
        \intE |P_sf|^p \,dm = p\int\limits_s^{\infty} \E_p[P_tf] \dt.
    \end{align*}
    Since $(P_t)_{t\geq0}$ is a strongly continuous semigroup on $\LpE$ and $f \mapsto \int_E |f|^p\,dm$ is a continuous functional on $\LpE$, the left-hand side tends to $\int_E |f|^p \,dm$ when $s\to0^+$.
    Since $\E_p[P_tf]\geq0$,
    the right-hand side tends to $p\int_0^{\infty} \E_p[u(t)] \dt$ by the monotone convergence theorem.
\end{proof}

\begin{rem}
    Notice that without assumption \ref{ass:diff}, the identity takes form
    \begin{align*}
        \intE |f|^p \,dm
        -
        \lim_{T\to\infty} \intE |P_Tf|^p \,dm
        =
        p\int\limits_0^{\infty} \E_p[P_tf] \dt,
    \end{align*}
    and in particular, the limit on the left-hand side exists.
    This situation is discussed in more detail in Appendix \ref{apx:ss}.
\end{rem}

\section{Pure-jump Dirichlet forms}
\label{sec:pure-jump}

In this section we consider a pure-jump regular Dirichlet form: we assume that the Dirichlet form $\E$ is given by \eqref{eq:E2=iintFp}. Here and below, $\diag:=\{(x,y)\in E\times E\!: x=y\}$. Measure $J$, the so-called \emph{jumping measure}, is a symmetric positive Radon measure on the $E\times E\setminus\diag$.

Our goal is to propose explicit form of $p$\nobreakdash-form for such Dirichlet form.

\begin{lem}
    We have
    \begin{align}
    \label{lim:P_tvaguely}
        \frac{1}{t}P_t(dx,dy) \to J(dx,dy)
        \quad
        \mbox{vaguely on } E\times E\setminus\diag  \mbox{ when }t\to0^+.
    \end{align}
\end{lem}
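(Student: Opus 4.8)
The plan is to view $\mu_t:=\frac1t P_t(dx,dy)$ as Radon measures on $E\times E\setminus\diag$ and to test them against $\phi\in C_c(E\times E\setminus\diag)$. Everything rests on one observation: a tensor product $u\otimes v$ of functions with \emph{disjoint} compact supports is simultaneously a test function supported away from $\diag$ and a quantity controlled by the approximating form $\E^{(t)}$, whose limit $\E$ is already understood on $\D(\E)$. So I would first treat such products and then approximate a general $\phi$ by them, using the regularity of $\E$ together with the strict separation of $\supp\phi$ from the diagonal.

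For the first, computational, part I would rewrite $\E^{(t)}$ bilinearly exactly as in the derivation of \eqref{eq:Et=intintFp} (with $p=2$): using conservativeness $\intE P_t(x,dy)=1$ and symmetry of $P_t$, for $u,v\in L^2\EAm$
\begin{align*}
	\E^{(t)}(u,v)=\frac1t\langle u-P_tu,v\rangle=\frac{1}{2t}\iintEE (u(x)-u(y))(v(x)-v(y))\,P_t(dx,dy).
\end{align*}
I would then invoke the standard fact that $t\mapsto\E^{(t)}(u,u)$ is nonincreasing and increases to $\E(u,u)$ as $t\to0^+$ for $u\in\D(\E)$ \cite[Lemma 1.3.4]{MR2778606}, so that by polarization $\E^{(t)}(u,v)\to\E(u,v)$ for all $u,v\in\D(\E)$. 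Now if $u,v\in\D(\E)\cap C_c(E)$ have $\supp u\cap\supp v=\emptyset$, then $uv\equiv0$, so the symmetric function $\Phi(x,y):=u(x)v(y)+u(y)v(x)=-(u(x)-u(y))(v(x)-v(y))$ is supported in $\{(x,y):d(x,y)\ge\mathrm{dist}(\supp u,\supp v)\}$ and hence lies in $C_c(E\times E\setminus\diag)$. The display above and the definition \eqref{eq:E2=iintFp} give
\begin{align*}
	\mu_t(\Phi)=-2\E^{(t)}(u,v)\xrightarrow[t\to0^+]{}-2\E(u,v)=\iintEEd\Phi(x,y)\,J(dx,dy)=J(\Phi),
\end{align*}
and by linearity $\mu_t(\Phi)\to J(\Phi)$ for every $\Phi$ in the linear span $\mathcal S$ of such disjoint-support products.

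The remaining, and genuinely harder, part is to pass from $\mathcal S$ to an arbitrary $\phi\in C_c(E\times E\setminus\diag)$. Set $K:=\supp\phi$, so $\delta:=\mathrm{dist}(K,\diag)>0$. I would cover the compact projections $\pi_1(K)\cup\pi_2(K)$ by finitely many relatively compact balls of radius $<\delta/4$ and, using regularity of $\E$, choose $\eta_i\in\D(\E)\cap C_c(E)$ with $0\le\eta_i\le1$, adapted to the cover so that $\sum_i\eta_i\ge1$ on $\pi_1(K)\cup\pi_2(K)$ and so that indices attached to balls with disjoint closures (the ``far'' pairs) have disjointly supported $\eta_i$. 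Since $\D(\E)\cap C_c(E)$ is an algebra, products stay in it. For far pairs the symmetrized products lie in $\mathcal S$, while for the near-diagonal pairs $\eta_i(x)\eta_j(y)$ vanishes on $K$; summing the far products yields a nonnegative $\psi:=\sum_{i\ne j,\ \text{far}}\eta_i\otimes\eta_j\in\mathcal S$ with $\psi\ge1$ on $K$. As $\psi\in\mathcal S$, the previous step gives $\mu_t(\psi)\to J(\psi)<\infty$, which provides the crucial $t$-uniform mass bound $M:=\sup_{0<t\le t_0}\mu_t(\psi)<\infty$. Finally, on each product of far balls Stone--Weierstrass approximates $\phi$ uniformly by finite sums of continuous tensor products; localizing by the $\eta_i$ and approximating each factor in sup-norm by a $\D(\E)\cap C_c(E)$ function (regularity, keeping supports separated) produces $\Phi_\varepsilon\in\mathcal S$ with $|\phi-\Phi_\varepsilon|\le\varepsilon\psi$. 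Then
\begin{align*}
	|\mu_t(\phi)-J(\phi)|\le \varepsilon\bigl(\mu_t(\psi)+J(\psi)\bigr)+|\mu_t(\Phi_\varepsilon)-J(\Phi_\varepsilon)|\le \varepsilon\bigl(M+J(\psi)\bigr)+|\mu_t(\Phi_\varepsilon)-J(\Phi_\varepsilon)|,
\end{align*}
and letting $t\to0^+$ (the middle term vanishes since $\Phi_\varepsilon\in\mathcal S$) and then $\varepsilon\to0$ gives $\mu_t(\phi)\to J(\phi)$.

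I expect this last step to be the main obstacle: one must genuinely use the regularity of $\E$ to realize both the cover-adapted functions and the Stone--Weierstrass factors inside $\D(\E)\cap C_c(E)$, and the strict separation $\mathrm{dist}(K,\diag)>0$ to keep all tensor factors disjointly supported and to secure the uniform bound $M$. The identification $\lim_{t\to0^+}\E^{(t)}=\E$ and the computation for products are, by comparison, routine.
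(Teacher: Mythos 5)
Your proof is correct in outline and is, in essence, the very argument the paper points to: the paper omits its own proof, referring instead to the analogous resolvent statement \cite[(3.2.7)]{MR2778606}, whose standard proof is exactly your scheme --- test $\frac{1}{t}P_t(dx,dy)$ against symmetrized disjointly-supported products $u\otimes v$ with $u,v\in\D(\E)\cap \C_c(E)$, using $\E^{(t)}(u,v)\to\E(u,v)$ (monotonicity plus polarization) and the representation \eqref{eq:E2=iintFp}, then pass to general $\phi\in \C_c(E\times E\setminus\diag)$ by a Stone--Weierstrass/partition-of-unity argument with a dominating $\psi$ in the span and the resulting $t$-uniform mass bound. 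One detail to tighten when writing it out: the bound $|\phi-\Phi_\varepsilon|\le\varepsilon\psi$ needs $\psi\ge1$ on the supports of the approximants, not merely on $\supp\phi$, which is handled routinely by a second, enlarged layer of cutoffs (or by writing $\phi=\psi\cdot(\phi/\psi)$ and uniformly approximating the continuous function $\phi/\psi$).
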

An analogous result for the resolvent
rather than the semigroup $(P_t)$
was showed in \cite[(3.2.7)]{MR2778606}. The proof is similar and we omit it.

We consider the class $\goodfunctions$ of non-negative functions $f$ on $E\times E$ such that
\begin{align*}
    \lim_{t\to0^+} \frac{1}{t} \iintEE f(x,y) \,P_t(dx,dy)
    =
    \iintEEd f(x,y) \,J(dx,dy) < \infty.
\end{align*}

We know that if $u\in\D(\E)$ and $f(x,y)=(u(y)-u(x))^2$, then $f\in\goodfunctions$ because
\begin{align*}
    \lim_{t\to0^+} \frac{1}{t} \iintEE (u(y)-u(x))^2 \,P_t(dx,dy)
    &=
    \lim_{t\to0^+} 2\E^{(t)}(u,u)
    =
    2\E[u]
    \\
    &=
    \iintEEd (u(y)-u(x))^2\,J(dx,dy).
\end{align*}
Here we used \eqref{eq:Et=intintHp}.

\begin{lem}
\label{lem:if_g_good_then_also_f}
    Suppose that $0\leq f\leq g$, $f=g=0$ on $\diag$, $f,g\in\C(E\times E)$ and $g\in\goodfunctions$. Then $f\in\goodfunctions$.
\end{lem}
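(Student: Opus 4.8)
The plan is to view $\mu_t:=\tfrac1t P_t(dx,dy)$ and $J$ as Radon measures on the locally compact space $E\times E\setminus\diag$ and to combine the vague convergence $\mu_t\to J$ from \eqref{lim:P_tvaguely} with the domination $0\le f\le g$, which provides exactly the tightness needed to pass from vague convergence (valid only against compactly supported test functions) to convergence against the non-compactly-supported function $f$. Since $f=g=0$ on $\diag$, we may read $\tfrac1t\iintEE(\cdot)\,P_t(dx,dy)$ as an integral against $\mu_t$ over $E\times E\setminus\diag$, so the diagonal contributes nothing. Moreover $g\in\goodfunctions$ gives $\iintEEd g\,J<\infty$, so $g$ and hence $f$ belong to $L^1(J)$.

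First I would exhaust $E\times E\setminus\diag$ (which is locally compact and $\sigma$-compact, since $E$ is locally compact, separable and metrizable) by compact sets and use Urysohn's lemma to produce cutoffs $\phi_n\in\C_c(E\times E\setminus\diag)$ with $0\le\phi_n\le1$ and $\phi_n\to1$ pointwise. For each fixed $n$ the functions $f\phi_n$ and $g\phi_n$ are continuous with compact support in $E\times E\setminus\diag$, so \eqref{lim:P_tvaguely} yields
\begin{align}
\label{eq:planvague}
\frac1t\iintEE f\phi_n\,P_t(dx,dy)\xrightarrow[t\to0^+]{}\iintEEd f\phi_n\,J,
\qquad
\frac1t\iintEE g\phi_n\,P_t(dx,dy)\xrightarrow[t\to0^+]{}\iintEEd g\phi_n\,J.
\end{align}

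For the lower bound, since $f\ge0$ and $\phi_n\le1$ one has $\tfrac1t\iintEE f\,P_t\ge\tfrac1t\iintEE f\phi_n\,P_t$; passing to the limit in $t$ via \eqref{eq:planvague} and then letting $n\to\infty$ (dominated convergence in $L^1(J)$, with dominating function $f$) gives $\liminf_{t\to0^+}\tfrac1t\iintEE f\,P_t\ge\iintEEd f\,J$. For the upper bound I would split $f=f\phi_n+f(1-\phi_n)$ and control the tail by $g$: using $0\le f(1-\phi_n)\le g(1-\phi_n)$ together with $g\in\goodfunctions$,
\begin{align}
\label{eq:plantail}
\frac1t\iintEE f(1-\phi_n)\,P_t
\le
\frac1t\iintEE g\,P_t-\frac1t\iintEE g\phi_n\,P_t
\xrightarrow[t\to0^+]{}
\iintEEd g(1-\phi_n)\,J.
\end{align}
Combined with \eqref{eq:planvague} this yields $\limsup_{t\to0^+}\tfrac1t\iintEE f\,P_t\le\iintEEd f\phi_n\,J+\iintEEd g(1-\phi_n)\,J$, and letting $n\to\infty$ (again dominated convergence in $L^1(J)$) sends the second integral to $0$ and the first to $\iintEEd f\,J$.

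Combining the two bounds gives $\lim_{t\to0^+}\tfrac1t\iintEE f\,P_t=\iintEEd f\,J<\infty$, that is, $f\in\goodfunctions$. The one genuinely delicate point is that \eqref{lim:P_tvaguely} governs only compactly supported test functions, so neither the region near $\diag$ nor, when $E$ is noncompact, the region at infinity is reached directly; the hypothesis $g\in\goodfunctions$ is used precisely to make the tail $\tfrac1t\iintEE g(1-\phi_n)\,P_t$ uniformly small as $t\to0^+$, which is the heart of the argument.
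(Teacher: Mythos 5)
Your proof is correct and follows essentially the same route as the paper: decompose $f$ with a cutoff supported away from $\diag$, use the vague convergence \eqref{lim:P_tvaguely} on the compactly supported part, and exploit $g\in\goodfunctions$ to make the tail $\tfrac1t\iintEEd(1-\varphi)f\,P_t(dx,dy)$ uniformly small as $t\to0^+$. The only difference is bookkeeping--the paper fixes $\varepsilon>0$ and a single compact $K$ with $\iint_{K^c}g\,dJ<\varepsilon$, whereas you run an exhausting sequence of cutoffs and finish with dominated convergence in $L^1(J)$--which is an interchangeable packaging of the same argument.
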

\begin{proof}
    Fix $\varepsilon>0$.
    Since $g\in\goodfunctions$, we have
    \begin{align*}
        \iintEEd g(x,y) \,J(dx,dy) < \infty.
    \end{align*}
    Therefore, 
    \begin{align*}
        \iintEEd f(x,y) \,J(dx,dy)
        \leq
        \iintEEd g(x,y) \,J(dx,dy)
        <
        \infty
    \end{align*}
    and there is a compact subset $K\subseteq E\times E\setminus\diag$ such that
    \begin{align}
    \label{neq:Kclessepsilon}
        \iint\limits_{K^c} g(x,y) \,J(dx,dy) < \varepsilon.
    \end{align}
    Let $\varphi\in\C_c(E\times E\setminus\diag)$ be such that $0\leq\varphi\leq1$ and $\varphi=1$ on $K$. Since $f$ is continuous, we have $\varphi\cdot f\in\C_c(E\times E\setminus\diag)$, hence from \eqref{lim:P_tvaguely} we obtain
    \begin{align}
    \label{lim:iintfvarphi}
        \lim_{t\to0^+}
        \frac{1}{t}
        \iintEEd
        \varphi(x,y) f(x,y)\,P_t(dx,dy)
        =
        \iintEEd
        \varphi(x,y) f(x,y)\,J(dx,dy).
    \end{align}
    Using \eqref{neq:Kclessepsilon} we can write
    \begin{align}
    \label{neq:int(1-varphi)fJ}
        \iintEEd
        (1-\varphi(x,y))f(x,y) \,J(dx,dy)
        &\leq
        \iintEEd
        (1-\varphi(x,y))g(x,y) \,J(dx,dy)
        \nonumber
        \\
        &\leq
        \iint\limits_{K^c}
        g(x,y) \,J(dx,dy)
        <
        \varepsilon.
    \end{align}
    We also have
    \begin{align*}
        \frac{1}{t}
        \iintEEd
        (1-\varphi(x,y))f(x,y) \,P_t(dx,dy)
        &\leq
        \frac{1}{t}
        \iintEEd
        (1-\varphi(x,y))g(x,y) \,P_t(dx,dy)
        \\
        &=
        \frac{1}{t}
        \iintEEd
        g(x,y) \,P_t(dx,dy)
        \\
        &\quad-
        \frac{1}{t}
        \iintEEd
        \varphi(x,y) g(x,y) \,P_t(dx,dy).
    \end{align*}
    Since $g\in\goodfunctions$ and $g$ is continuous, we have $\varphi\cdot g\in\C_c(E\times E\setminus\diag)$, and thus, using \eqref{lim:P_tvaguely}, we find that as $t\to0^+$, the right-hand side converges to
    \begin{align*}
        \iintEEd
        &g(x,y) \,J(dx,dy)
        -
        \iintEEd
        \varphi(x,y) g(x,y) \,J(dx,dy)
        \\
        &=
        \iintEEd
        (1-\varphi(x,y))g(x,y) \,J(dx,dy)
        \leq
        \iint\limits_{K^c}
        g(x,y) \,J(dx,dy)
        <
        \varepsilon.
    \end{align*}
    Here we used \eqref{neq:Kclessepsilon}. Therefore,
    \begin{align}
    \label{neq:int(1-varphi)fPt}
        \limsup_{t\to0^+}
        \frac{1}{t}
        \iintEEd
        (1-\varphi(x,y))f(x,y) \,P_t(dx,dy)
        <
        \varepsilon.
    \end{align}
    Finally, from \eqref{lim:iintfvarphi}, \eqref{neq:int(1-varphi)fJ} and \eqref{neq:int(1-varphi)fPt} we get
    \begin{align*}
        &\limsup_{t\to0^+}
        \left|\ 
        \frac{1}{t}\iintEEd f(x,y) \,P_t(dx,dy)
        -
        \iintEEd f(x,y) \,J(dx,dy)
        \right|
        \\
        &\leq
        \limsup_{t\to0^+}
        \left|\ 
        \frac{1}{t}\iintEEd \varphi(x,y) f(x,y) \,P_t(dx,dy)
        -
        \iintEEd \varphi(x,y) f(x,y) \,J(dx,dy)
        \right|
        \\
        &\quad+
        \limsup_{t\to0^+}
        \frac{1}{t}
        \iintEEd
        (1-\varphi(x,y))f(x,y) \,P_t(dx,dy)
        \\
        &\quad+
        \iintEEd
        (1-\varphi(x,y))f(x,y) \,J(dx,dy)
        <
        0+\varepsilon+\varepsilon = 2\varepsilon.
    \end{align*}
    Since $\varepsilon>0$ is arbitrary,
    \begin{align*}
        \lim_{t\to0^+}\frac{1}{t}\iintEEd f(x,y) \,P_t(dx,dy)
        =
        \iintEEd f(x,y) \,J(dx,dy).
    \end{align*}
\end{proof}

\begin{thm}
\label{thm:Ep=iintFp}
    Let $u\in\DEp$. Then $u^{\langle p/2\rangle}\in\D(\E)$. Moreover, if $u\in\C(E)$, then
    \begin{align}
    \label{eq:Ep=iintFp}
        \E_p[u] = \frac{1}{p} \iintEEd F_p(u(x),u(y)) \,J(dx,dy).
    \end{align}
\end{thm}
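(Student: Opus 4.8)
The plan is to split the statement into its two assertions and reduce each to results already established. Throughout I write $w := u^{\langle p/2\rangle}$, and record that $|w|^2 = |u|^p$, so that $w \in L^2\EAm$ as soon as $u \in \LpE$. The bridge between the $p$-form and the ordinary quadratic form is Lemma \ref{lem:FpsimD}, which gives $c_p(w(y)-w(x))^2 \le F_p(u(x),u(y)) \le C_p(w(y)-w(x))^2$; together with the representation \eqref{eq:Et=intintFp} this lets me sandwich $\E^{(t)}(u,u^{\langle p-1\rangle})$ between constant multiples of $\E^{(t)}(w,w)$.

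For the inclusion $w \in \D(\E)$ I would feed the lower bound of Lemma \ref{lem:FpsimD} into \eqref{eq:Et=intintFp}:
\begin{align*}
	\E^{(t)}(u, u^{\langle p-1\rangle})
	&= \frac{1}{pt}\iintEE F_p(u(x),u(y))\,P_t(dx,dy) \\
	&\ge \frac{c_p}{pt}\iintEE (w(y)-w(x))^2\,P_t(dx,dy)
	= \frac{2c_p}{p}\,\E^{(t)}(w,w).
\end{align*}
Since $u \in \DEp$, the left-hand side converges to the finite limit $\E_p[u]$ as $t\to 0^+$, so $\E^{(t)}(w,w)$ stays bounded near $0$. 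Because $t\mapsto\E^{(t)}(w,w)$ is non-increasing for $w\in L^2\EAm$ (the monotonicity recalled from \cite[Lemma 1.3.4]{MR2778606}), its limit as $t\to 0^+$ equals $\sup_{t>0}\E^{(t)}(w,w)$, which is therefore finite (indeed $\le \tfrac{p}{2c_p}\E_p[u]$). By the standard description of the domain of a regular Dirichlet form, a function $w\in L^2\EAm$ with $\sup_{t>0}\E^{(t)}(w,w)<\infty$ lies in $\D(\E)$, and $\E[w]=\lim_{t\to0^+}\E^{(t)}(w,w)$. This establishes the first claim, and note that no continuity of $u$ is needed here.

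For the identity, assume in addition that $u \in \C(E)$; then $w = u^{\langle p/2\rangle}$ is continuous as well. It suffices to show that $f(x,y) := F_p(u(x),u(y))$ belongs to the class $\goodfunctions$, since then by definition
\begin{align*}
	\E_p[u]
	&= \lim_{t\to 0^+}\frac{1}{pt}\iintEE F_p(u(x),u(y))\,P_t(dx,dy) \\
	&= \frac{1}{p}\iintEEd F_p(u(x),u(y))\,J(dx,dy).
\end{align*}
I would check the hypotheses of Lemma \ref{lem:if_g_good_then_also_f} for this $f$ together with $g(x,y) := C_p(w(y)-w(x))^2$. Both $f$ and $g$ are continuous on $E\times E$ and vanish on $\diag$ (as $F_p(a,a)=0$), and $0 \le f \le g$ by the upper bound in Lemma \ref{lem:FpsimD}. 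Finally $g \in \goodfunctions$: the first part gives $w \in \D(\E)$, the computation preceding Lemma \ref{lem:if_g_good_then_also_f} shows $(w(y)-w(x))^2 \in \goodfunctions$, and this property is clearly preserved under multiplication by the positive constant $C_p$. Lemma \ref{lem:if_g_good_then_also_f} then yields $f \in \goodfunctions$, which is exactly the asserted formula.

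The only genuinely non-routine ingredient is the domain characterisation invoked in the first part—passing from boundedness of the approximate forms $\E^{(t)}(w,w)$ to membership $w \in \D(\E)$. This rests on the monotonicity of $t\mapsto\E^{(t)}(w,w)$ and the spectral description of $\D(\E)$, and it is the one place where the Dirichlet-form structure, rather than a formal manipulation of integrals, is essential; everything else is a domination-and-continuity argument routed through Lemma \ref{lem:if_g_good_then_also_f}.
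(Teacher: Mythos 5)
Your proof is correct and takes essentially the same route as the paper: the same sandwich of $\E^{(t)}(u,u^{\langle p-1\rangle})$ and $\E^{(t)}(w,w)$ via Lemma \ref{lem:FpsimD} and \eqref{eq:Et=intintFp}, combined with monotonicity of $t\mapsto\E^{(t)}(w,w)$, for the inclusion $w=u^{\langle p/2\rangle}\in\D(\E)$, and the same application of Lemma \ref{lem:if_g_good_then_also_f} with $f(x,y)=F_p(u(x),u(y))$ and $g(x,y)=C_p\left(w(y)-w(x)\right)^2$ for the identity. The only (harmless) difference is that you make explicit the domain characterisation of $\D(\E)$ via $\sup_{t>0}\E^{(t)}(w,w)<\infty$, which the paper leaves implicit in its ``i.e.'' step.
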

\begin{proof}
    Using \eqref{eq:Et=intintFp}, the fact that $F_2(a,b)=(b-a)^2$ and  Lemma~\ref{lem:FpsimD}, we find that
    \begin{align*}
        \E^{(t)}\left(u^{\langle p/2\rangle},u^{\langle p/2\rangle}\right)
        &=
        \frac{1}{2t}
        \iintEE
        \left(
        (u(y))^{\langle p/2\rangle} - (u(x))^{\langle p/2\rangle}
        \right)^2
        \,P_t(dx,dy)
        \\
        &\leq
        c_p^{-1} \frac{1}{2t}
        \iintEE
        F_p(u(x),u(y))
        \,P_t(dx,dy)
        \\
        &=
        \frac{p}{2c_p} \E^{(t)}\left(u,u^{\langle p-1\rangle}\right).
    \end{align*}
    Since $u\in\DEp$, the right-hand side converges to a finite limit $\frac{p}{2c_p} \E_p[u]$ as $t\to0^+$, and since the left-hand side is non-increasing as a function of $t$, a finite limit $\lim_{t\to0^+} \E^{(t)}\left(u^{\langle p/2\rangle},u^{\langle p/2\rangle}\right)$ exists, i.e., $u^{\langle p/2\rangle}\in\D(\E)$.
    
    Let us additionally assume that $u\in\C(E)$.
    Denote
    $f(x,y):=F_p(u(x),u(y))$,
    and 
    $g(x,y):=C_p\left(
    (u(y))^{\langle p/2\rangle} - (u(x))^{\langle p/2\rangle}
    \right)^2$, where $C_p$ is as in \eqref{neq:FpsimD}.
    Since $u^{\langle p/2\rangle}\in\D(\E)$, we have $g\in\goodfunctions$, and by \eqref{neq:FpsimD} we have $f\leq g$. Moreover, $f=g=0$ on $\diag$ and $f,g\in\C(E\times E)$ because $u\in\C(E)$. Therefore, we can use Lemma~\ref{lem:if_g_good_then_also_f} and conclude that $f\in\goodfunctions$. This means that
    \begin{align*}
        \E_p[u]
        &=
        \lim_{t\to0^+} \E^{(t)}\left(u,u^{\langle p-1\rangle}\right)
        =
        \lim_{t\to0^+} \frac{1}{pt}
        \iintEE
        F_p(u(x),u(y)) \,P_t(dx,dy)
        \\
        &=
        \frac{1}{p}
        \iintEEd
        F_p(u(x),u(y)) \,J(dx,dy).
    \end{align*}
\end{proof}

Now, we are ready to prove the main result of the article.
\begin{proof}[Proof of Theorem~\ref{thm:HSp-j}]
    It is enough to note that, by assumption, for every $t>0$ we have $P_tf\in\C(E)\cap\DEp$, and so we can rewrite formula \eqref{eq:HS} using \eqref{eq:Ep=iintFp} with $u=P_tf$.
\end{proof}

\appendix
\section{Strong stability of the heat semigroup}
\label{apx:ss}

In this section we discuss a sufficient condition for assumption \ref{ass:diff} as well as the question of how the main result of this paper changes without this assumption.

In general, when assumption \ref{ass:diff} is not necessarily true, the Hardy-Stein identity takes the following form
\begin{align}
\label{eq:HSwithoutSS}
    \intE |f|^p \,dm - \lim_{T\to\infty} \|P_T f\|_p^p
    =
    \int\limits_0^{\infty} \quad\iintEEd
    F_p(P_tf(x),P_tf(y)) \,J(dx,dy)dt.
\end{align}
The question is, under what assumptions the term $\lim_{T\to\infty} \|P_T f\|_p^p$ is equal to zero or can be written in a simpler form.

For $p\in[1,\infty]$ we define the operator $P_\infty\!:\LpE\to \LpE$ by
\begin{align*}
    P_\infty f := \lim_{T\to\infty} P_T f,
    \qquad
    \mbox{in }
    \LpE.
\end{align*}
First of all, the following fact holds.
\begin{fact}
\label{fact:ortopro}
    For $p=2$, the operator $P_\infty$ is the orthogonal projection onto the kernel of the generator $L_2$ (given by \eqref{eq:Lp-generator}).
\end{fact}
\begin{proof}
    Using spectral theorem (in the multiplication version) for the semigrup $(P_t)_{t\geq0}$ on $L^2\EAm$ there is a measure space $(X,\Sigma,\mu)$,
    function $\lambda\in L^\infty\XSm$
    (here, for any $p\in[1,\infty]$ we denote $L^p\XSm:=L^p(X,\Sigma,\mu)$)
    and a unitary operator
    $U\!:L^2\EAm\to L^2\XSm$ such that
    \begin{align}
    \label{eq:spec-Pt}
        U^* P_t U =  M_t 
    \end{align}
    where, for every $t\geq0$
    \begin{align*}
        M_tg(x) := e^{-t\lambda(x)}g(x),
        \qquad
        g\in L^2\XSm
    \end{align*}
    is the multiplication operator. Moreover, the following equality holds
    \begin{align}
    \label{eq:spec-L2}
        U^* (-L_2) U =  M ,
    \end{align}
    where
    $Mg(x) := \lambda(x)g(x)$.
    The operator $-L_2$ is a non-negative definite self-adjoint operator on $L^2\EAm$, therefore $\lambda\geq0$ $\mu$-almost everywhere.
    
    It is enough to note that for any $g\in L^2\XSm$
    \begin{align*}
         M_\infty g(x) := \lim_{T\to\infty} M_T g(x) = \indyk_{\{ \lambda=0\}}(x)g(x)
         \qquad
         \mbox{for $\mu$-almost every }x,
    \end{align*}
    i.e., $M_\infty$ is the orthogonal projection onto the kernel of the operator $M$.
    By \eqref{eq:spec-Pt} and \eqref{eq:spec-L2},
    $P_\infty=UM_\infty U^*$
    is the orthogonal projection onto the kernel of
    $-L_2 = UMU^*$.
\end{proof}

Using the above fact, we can show that when the jumping measure $J$ is irreducible in an appropriate sense, then for every $f\in\LpE$ the function $P_\infty f$ is constant a.e.

\begin{fact}
\label{fact:Pinfty=bar}
    Let $p\in(1,\infty)$. Assume that for every $A\in\mathcal{B}$
    \begin{align}
    \label{ass:accessible}
        m(A)>0, m(A^c)>0
        \quad \mbox{implies} \quad
        J(A\times A^c)>0.
    \end{align}
    Then for every $f\in\LpE$
    \begin{align*}
        P_\infty f = \Bar{f}
        \qquad
        m\mbox{-a.e.},
    \end{align*}
    where
    \begin{enumerate}[label=(\alph*)]
        \item $\Bar{f}\equiv0$ when $m(E)=\infty$;
        \item $\Bar{f}$ is equal to the mean value of the function $f$, i.e.,
            \begin{align*}
                \Bar{f} \equiv \frac{1}{m(E)}\intE f \,dm
            \end{align*}
            when $m(E)<\infty$.
    \end{enumerate}
\end{fact}
\begin{rem}
    One can prove similar results when the space $E$ can be divided into mutually non-accessible sets.
\end{rem}
\begin{proof}[Proof of the fact]
    Let us first assume that $p=2$. By Fact~\ref{fact:ortopro} the following equality holds
    \begin{align*}
        \E(P_\infty f,P_\infty f) = -\langle L_2P_\infty f, P_\infty f\rangle = 0.
    \end{align*}
    Therefore, by \eqref{eq:E2=iintFp} and assumption \eqref{ass:accessible} we obtain that $P_\infty f$ is constant $m$-a.e. Let $c$ be this constant.

    When $m(E)=\infty$, every $m$-a.e. constant function on $L^2\EAm$ must be equal to zero $m$-a.e. Otherwise, when $m(E)<\infty$, we have $f,P_\infty f\in L^2\EAm\subseteq L^1\EAm$ and we can write
    \begin{align*}
        \intE f \,dm
        &=
        \lim_{T\to\infty} \intE f \,dm
        =
        \lim_{T\to\infty} \intE P_Tf \,dm
        =
        \lim_{T\to\infty} \langle P_Tf, 1 \rangle
        =
        \langle P_\infty f, 1 \rangle
        \\
        &=
        \intE P_\infty f \,dm
        =
        c m(E)
    \end{align*}
    and therefore $c=\Bar{f}$. Here we used \eqref{eq:intPtu=intu}.

    Now, we relax the assumption about $p$. Let $p\neq2$ and fix $\varepsilon>0$. By density, there is a function $g\in L^1\EAm \cap L^\infty\EAm$ such that
    \begin{align}
    \label{neq:density}
        \|f-g\|_p < \varepsilon.
    \end{align}
    Of course, by the contraction property, we have also
    \begin{align}
    \label{neq:densityP_T}
        \|P_Tf-P_Tg\|_p < \varepsilon.
    \end{align}
    
    By an interpolation argument we have $g\in L^2\EAm\cap\LpE$ and therefore $P_\infty g=\Bar{g}$. Moreover, by conservativeness, we have
    $P_t\Bar{g} = \Bar{g}$.

    For $p>2$, using log-convexity of the $L^p$-norm with respect to $p$ and the contraction property, we get
    \begin{align*}
        \|P_Tg - \Bar{g}\|_p^p
        \leq
        \|P_Tg - \Bar{g}\|_2^2 \|P_Tg - \Bar{g}\|_\infty^{p-2}
        \leq
        \|P_Tg - \Bar{g}\|_2^2 \|g - \Bar{g}\|_\infty^{p-2}
        \to
        0
    \end{align*}
    when $T\to\infty$. Similarly, for $p<2$
    \begin{align*}
        \|P_Tg - \Bar{g}\|_p^p
        \leq
        \|P_Tg - \Bar{g}\|_2^{2p-2} \|P_Tg - \Bar{g}\|_1^{2-p}
        \leq
        \|P_Tg - \Bar{g}\|_2^{2p-2} \|g - \Bar{g}\|_1^{2-p}
        \to
        0
    \end{align*}
    when $T\to\infty$.

    We claim that in the case $m(E)<\infty$
    \begin{align}
    \label{neq:bars}
        \|\Bar{g}-\Bar{f}\|_p < m(E)^{1/p-1} \varepsilon.
    \end{align}
    Indeed, by Jensen's inequality
    \begin{align*}
        |\Bar{g}-\Bar{f}|^p
        =
        m(E)^{-p} \left| \intE (g-f) \,dm \right|^p
        \leq
        m(E)^{-p} \intE |g-f|^p \,dm
        <  m(E)^{-p} \varepsilon^p.
    \end{align*}
    This implies \eqref{neq:bars}.
    When $m(E)=\infty$, then, by definition $\Bar{f},\Bar{g}\equiv0$ hence of course $\|\Bar{g}-\Bar{f}\|_p=0$.

    Finally, in the case $m(E)<\infty$, using \eqref{neq:density}, \eqref{neq:densityP_T} and \eqref{neq:bars} we obtain
    \begin{align*}
        \|P_Tf - \Bar{f}\|_p
        &\leq
        \|P_Tf - P_Tg\|_p + \|P_Tg - \Bar{g}\|_p + \|\Bar{g}-\Bar{f}\|_p
        \\
        &<
        \varepsilon (1+m(E)^{1/p-1}) + \|P_Tg - \Bar{g}\|_p
    \end{align*}
    and therefore
    \begin{align*}
        \limsup_{T\to\infty} \|P_Tf - \Bar{f}\|_p
        \leq
        \varepsilon (1+m(E)^{1/p-1}).
    \end{align*}
    Since $\varepsilon>0$ was arbitrary,
    $\lim_{T\to\infty} \|P_Tf - \Bar{f}\|_p = 0$.

    Similarly
    $\lim_{T\to\infty} \|P_Tf - \Bar{f}\|_p =\lim_{T\to\infty} \|P_Tf\|_p = 0$
    in the case $m(E)=\infty$.
\end{proof}

\begin{cor}
    Let $p\in(1,\infty)$. Assume that \eqref{ass:accessible} holds. If $m(E)=\infty$, then assumption \ref{ass:diff} holds.
\end{cor}

\begin{cor}
    Let $p\in(1,\infty)$. Assume that \eqref{ass:accessible} holds. If $m(E)<\infty$, then identity \eqref{eq:HSwithoutSS} can by rewritten in the following form
    \begin{align}
        \intE |f|^p \,dm - \frac{1}{m(E)^{p-1}} \left| \intE f \,dm \right|^p
        =
        \int\limits_0^{\infty} \quad\iintEEd
        F_p(P_tf(x),P_tf(y)) \,J(dx,dy)dt.
    \end{align}
\end{cor}
\begin{proof}
    It is enough to utilize \eqref{eq:HSwithoutSS}, Fact~\ref{fact:Pinfty=bar}
    and notice that
    $\lim_{T\to\infty} \|P_T f\|_p^p = \|\Bar{f}\|_p^p = m(E)^{1-p}\left| \int_E f\,dm\right|^p$.
\end{proof}

\section{Continuity of $P_tf$ for Dirichlet forms on $d$\protect\nobreakdash-sets}
\label{apx:d-set}

In this section, we sketch the proof of condition \ref{ass:continuous} within the context of Example~\ref{ex:d-set}.

When assumption \eqref{ass:dsets} holds, the semigroup $(P_t)_{t\geq0}$ has H\"older continuous kernel $p_t(x,y)$ such that for some constants $c_1,c_2>0$
\begin{align}
\label{neq:ptneq}
    c_1 \min\left\{\frac{1}{t^{d/\alpha}}, \frac{t}{|x-y|^{d+\alpha}}\right\}
    \leq p_t(x,y) \leq
    c_2 \min\left\{\frac{1}{t^{d/\alpha}}, \frac{t}{|x-y|^{d+\alpha}}\right\}
\end{align}
for all $x,y\in E$ and $t\in(0,1]$; see Theorem 1.1. and Theorem 4.14. in \cite{MR2008600}. This means that $P_tf$ ($t>0$) is given by
\begin{align*}
    P_tf(x) = \intE f(y)p_t(x,y)\,m(dy),
    \quad
    x\in E,
\end{align*}
for every $f\in\LpE$, $p\in[1 ,\infty]$.

To prove assumption \ref{ass:continuous} we will use the upper bound from \eqref{neq:ptneq} and H\"older continuity of $p_t$.

Let us first consider $f\in L^1\EAm$ and $t\in(0,1)$. Then there are the constants $c(t),\beta>0$ such that
$
    |p_t(x_1,y)-p_t(x_2,y)|
    \leq
    c(t) |x_1-x_2|^\beta
$
for all $x_1,x_2,y\in E$. By a simple calculation
\begin{align*}
    |P_tf(x_1) - P_tf(x_2)|
    \leq
    c(t) |x_1-x_2|^\beta \|f\|_1,
\end{align*}
hence $P_tf\in C(E)$.

Now, let $f\in L^\infty\EAm$ and $t\in(0,1)$. Let $f_k:=f\indyk_{B(x_0,k)}$ for some arbitrary $x_0\in E$. Let $K\subseteq E$ be any compact set. Then for all $x\in K$, by \eqref{neq:ptneq} and standard calculations, we can show that for sufficiently large $k$
\begin{align*}
    |P_tf(x) - P_tf_k(x)|
    \leq
    c_2\|f\|_\infty
    \int\limits_{\mathclap{E\setminus B(x_0,k)}} \frac{m(dy)}{|y-x_0|^{d+\alpha}}.
\end{align*}
Therefore, $P_tf_k$ convergences uniformly to $P_tf$ on $K$ when $k\to\infty$ for every compact set $K$. Since $f_k\in L^1\EAm$, $P_tf_k\in C(E)$, hence also $P_tf\in C(E)$.

Finally, for $f\in\LpE$ with any $p\in(1,\infty)$ it is enough to use decomposition
$
    f = f\indyk_{\{|f|\geq1\}} + f\indyk_{\{|f|<1\}}
    \in
    L^1\EAm + L^\infty\EAm.
$





\normalsize
\baselineskip=17pt









\printbibliography

\end{document}